\documentclass[12pt,reqno]{amsart}
\usepackage{amsmath,amssymb,amsfonts,amscd,latexsym,amsthm,mathrsfs,verbatim,cyrillic}
\usepackage[unicode]{hyperref}
\usepackage{hypbmsec}
\textheight22cm \textwidth15cm \hoffset-1.7cm \voffset-.5cm
\newcommand{\cyrrm}{\fontencoding{OT2}\selectfont\textcyrup}
%

\newtheorem{theorem}{Theorem}

\newtheorem{prop}{Proposition}
\theoremstyle{remark}

\let\eps\varepsilon

\renewcommand{\d}{{\mathrm d}}

\newcommand{\lcm}{\operatorname{lcm}}

\newcommand{\bx}{{\boldsymbol x}}
\newcommand{\bz}{{\boldsymbol z}}
\begin{document}

\title{A determinantal approach to irrationality}

\author{Wadim Zudilin}
\address{School of Mathematical and Physical Sciences, The University of Newcastle, Callaghan NSW 2308, AUSTRALIA}
\email{wzudilin@gmail.com}

\date{21 July 2015. \emph{Revised}: 2 February 2015}

\begin{abstract}
It is a classical fact that the irrationality of a number $\xi\in\mathbb R$ follows from the existence of a sequence $p_n/q_n$ with
integral $p_n$ and $q_n$ such that $q_n\xi-p_n\ne0$ for all $n$ and $q_n\xi-p_n\to0$ as $n\to\infty$. In this note we give an extension
of this criterion in the case when the sequence possesses an additional structure; in particular, the requirement $q_n\xi-p_n\to0$ is weakened.
Some applications are given including a new proof of the irrationality of $\pi$.
Finally, we discuss analytical obstructions to extend the new irrationality criterion further
and speculate about some mathematical constants whose irrationality is still to be established.
\end{abstract}

\subjclass[2010]{Primary 11J72; Secondary 11C20, 30C10, 30E05, 33C47}
\keywords{Irrationality; rational approximation; $\pi$; Hankel determinant; Fekete--Chebyshev constant; transfinite diameter}

\thanks{The work is supported by Australian Research Council grant DP170100466.}

\maketitle

\section{Irrationality criteria}
\label{s:irr}

Let $\xi$ be a real number we wish to prove the irrationality of.
Assume there exists a sequence $p_n/q_n$ with integral $p_n$ and $q_n$ such that $\xi\ne p_n/q_n$ for all $n$ and $q_n\xi-p_n\to0$ as $n\to\infty$.
If $\xi$ were rational, $\xi=p/q$ say, then $q(q_n\xi-p_n)=q_np-p_nq$ is a nonzero integer for all $n$, hence its absolute value is at least~1.
On the other hand, $|q(q_n\xi-p_n)|$ is less than~1 for $n$ sufficiently large; a contradiction.

Let us put the above classical irrationality criterion in a standard context of real numbers $\xi$ that happen to be periods.
Suppose we have a sequence of rational approximations
\begin{equation}
r_n=a_n\xi-b_n\in\mathbb Q\xi+\mathbb Q
\label{eq:r_n}
\end{equation}
such that
\begin{itemize}
\item[(a)] $0<r_n\le C_1\eps^n$ for some $C_1,\eps>0$ and all $n=1,2,\dots$;
\item[(b)] $\delta_na_n,\delta_nb_n\in\mathbb Z$ for some sequence of positive integers $\delta_n$; and
\item[(c)] $\delta_n<C_2\Delta^n$ for some $C_2,\Delta>0$ and all $n=1,2,\dots$\,.
\end{itemize}

\begin{prop}
\label{pr1}
Under hypotheses \textup{(a)--(c)}, if $\eps\Delta<1$ then $\xi$ is irrational.
\end{prop}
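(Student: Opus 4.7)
The plan is to mimic the classical argument outlined at the start of Section~1, inserting the denominator $\delta_n$ to clear fractions. First I would argue by contradiction, assuming $\xi=p/q$ with $p,q\in\mathbb Z$ and $q>0$. Then I would form the scaled quantity
\[
N_n := q\,\delta_n\,r_n = (p\,\delta_n a_n) - (q\,\delta_n b_n),
\]
and observe that by hypothesis (b) both $\delta_n a_n$ and $\delta_n b_n$ are integers, so $N_n\in\mathbb Z$. Moreover, hypothesis (a) gives $r_n>0$, whence $N_n\ne0$; therefore $|N_n|\ge1$ for every~$n$.

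Next I would bound $|N_n|$ from above by combining (a) and (c):
\[
|N_n| = q\,\delta_n\,r_n \le q\cdot C_2\Delta^n\cdot C_1\eps^n = qC_1C_2\,(\eps\Delta)^n.
\]
Since the assumption $\eps\Delta<1$ forces the right-hand side to tend to~$0$ as $n\to\infty$, we obtain $|N_n|<1$ for all sufficiently large~$n$, contradicting the integrality lower bound $|N_n|\ge1$. Hence $\xi$ cannot be rational.

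There is no real obstacle here: the proposition is a direct generalization of the opening paragraph of the section, with the only new ingredient being the common denominator $\delta_n$ needed to accommodate the fact that $a_n,b_n$ are merely rational. The sole point to be careful about is ensuring that $r_n\ne0$ (so that $N_n\ne0$), which is guaranteed by the strict inequality $0<r_n$ in~(a); if one only knew $r_n\ge0$, an extra argument would be required.
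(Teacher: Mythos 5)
Your proof is correct and follows exactly the route the paper intends: the paper does not write out a separate proof of Proposition~\ref{pr1}, treating it as the immediate adaptation of the classical argument in the opening paragraph of Section~\ref{s:irr}, and your scaled integer $q\,\delta_n r_n$ together with the bound $qC_1C_2(\eps\Delta)^n\to0$ is precisely that adaptation.
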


The principal goal of this note is to demonstrate that under some further (natural) assumptions on the approximants \eqref{eq:r_n}
we can replace the condition $\eps\Delta<1$ by a slightly different one.
Namely, assume additionally that
\begin{itemize}
\item[(d)] $\displaystyle r_n=\int_\gamma z(\bx)^n\omega(\bx)$ for some domain $\gamma\subset\mathbb R^m$,
non-constant continuous function $z(\bx)\ge0$ on $\gamma$ and measure (positive differential form) $\omega(\bx)$; and
\item[(e)] $\delta_n$ divides $\delta_{n+1}$ for all $n=1,2,\dots$\,.
\end{itemize}

\begin{prop}
\label{pr2}
Under hypotheses \textup{(a)--(e)}, if $\eps\Delta^{3/2}/4<1$ then $\xi$ is irrational.
\end{prop}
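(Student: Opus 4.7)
The plan is to use the integral representation (d) to enrich the linear forms from (a)--(c). For any $P(z)=\sum_{k=0}^N c_k z^k\in\mathbb Q[z]$ with coefficients of common denominator $D$, define
\[
R_{n,P}\;:=\;\sum_{k=0}^N c_k\,r_{n+k}\;=\;\int_\gamma z(\bx)^n P(z(\bx))\,\omega(\bx).
\]
Hypotheses (b) and (e) give $D\,\delta_{n+N}\,R_{n,P}\in\mathbb Z\xi+\mathbb Z$, while (a) and (d) yield
\[
|R_{n,P}|\;\le\;\max_{[0,\eps]}|P|\cdot\int_\gamma z(\bx)^n\,\omega(\bx)\;\le\;C_1\eps^n\max_{[0,\eps]}|P|
\]
(the image of $z$ sits inside $[0,\eps]$ since $\limsup r_n^{1/n}\le\eps$). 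So any polynomial of small sup-norm on $[0,\eps]$ produces a small linear form in $\xi$.

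First I would take $P_N$ of degree $N$ with rational coefficients approximating the monic Chebyshev polynomial of $[0,\eps]$, so that $\max_{[0,\eps]}|P_N|\lesssim(\eps/4)^N$---this is the Fekete--Chebyshev / transfinite-diameter bound invoked in the abstract, since the capacity of $[0,\eps]$ is $\eps/4$. The coefficient denominator of this rationalization is not free; a careful construction using the divisibility chain in (e) is designed to keep $D\lesssim\Delta^{N/2}$, and this exponent $1/2$ is precisely what produces the exponent $3/2$ in the final condition.

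Second, to ensure the resulting forms do not vanish I would pass to $P_N^2$ (of degree $2N$), which is non-negative on the image of $z$. By positivity of $\omega$ and non-constancy of $z$ we have $R_{n,P_N^2}>0$. Under the rationality assumption $\xi=p/q$, then $q\,D^2\,\delta_{n+2N}\,R_{n,P_N^2}$ is a \emph{positive} integer, hence $\ge1$. But the bounds above give
\[
\bigl|q\,D^2\,\delta_{n+2N}\,R_{n,P_N^2}\bigr|\;\le\;Cq\,(\Delta\eps)^n\bigl(\Delta^{3/2}\eps/4\bigr)^{2N},
\]
which tends to $0$ as $N\to\infty$ (for any fixed $n$) provided $\Delta^{3/2}\eps/4<1$. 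This is the desired contradiction.

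The main obstacle is the denominator bound $D\lesssim\Delta^{N/2}$: the monic Chebyshev polynomial on $[0,\eps]$ has coefficients whose naive common denominator is of order $4^N$ (from the Chebyshev expansion), which would only reproduce the condition of Proposition~\ref{pr1}. Achieving the sharper bound requires an arithmetic refinement of Fekete--Chebyshev that exploits the divisibility chain (e) to pair the Chebyshev denominators against the arithmetic of the $\delta_n$'s, splitting the denominator cost into a factor $\Delta^{1/2}$ per unit degree rather than a full $\Delta$. This arithmetic-Fekete--Chebyshev estimate is the combinatorial heart of the argument.
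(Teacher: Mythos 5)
Your reduction to the twisted linear forms $R_{n,P}=\sum_k c_k r_{n+k}=\int_\gamma z(\bx)^nP(z(\bx))\,\omega(\bx)$ is sound as far as it goes (the positivity trick with $P_N^2$ and the observation $\sup_\gamma z\le\eps$ are both fine), but the whole argument hinges on the unproved claim that there is a degree-$N$ polynomial $P_N$ with $\max_{[0,\eps]}|P_N|\lesssim(\eps/4)^N$ whose coefficients have common denominator $D\lesssim\Delta^{N/2}$. This is a genuine gap, and I do not believe it can be filled. Writing $\tilde P=DP_N\in\mathbb Z[z]$, you are asking for an integer polynomial of degree $N$ with $\max_{[0,\eps]}|\tilde P|\lesssim(\Delta^{1/2}\eps/4)^N$, i.e.\ for the one-variable integer Chebyshev constant $t_{\mathbb Z}([0,\eps])$ to be at most $\Delta^{1/2}\eps/4$. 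But $\Delta$ measures the denominators $\delta_n$ of the forms $r_n$ and has no bearing on which polynomials are small on $[0,\eps]$; the divisibility chain (e) cannot be ``paired against the Chebyshev denominators,'' because the denominators of the $r_{n+k}$ are already fully absorbed by the single factor $\delta_{n+N}$ and contribute nothing to the coefficients $c_k$. Unconditionally, $t_{\mathbb Z}([0,\eps])$ exceeds the capacity $\eps/4$ by a factor $c>1$ depending on $\eps$ alone (the Hilbert-type bound \eqref{e1} in one variable gives only $t_{\mathbb Z}\le\tau^{1/2}=\sqrt\eps/2$; Proposition~\ref{PR3} recovers $\eps/4$ only in the limit of many variables), so the resulting loss $c^N$ is uncontrolled by $\Delta$ and destroys the conclusion in the applications of Sections~\ref{s:log3}--\ref{s:pi}.

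The paper's proof avoids single linear forms altogether. It takes the Hankel determinants $R_n=\det_{0\le j,\ell<n}(r_{j+\ell})$, which are forms in $1,\xi,\dots,\xi^n$ of \emph{growing} degree; by Heine's identity \eqref{eq:Heine} they equal $\frac1{n!}\idotsint_{\gamma^n}\prod_{j<\ell}(z(\bx^\ell)-z(\bx^j))^2\,\omega^n>0$, so positivity comes for free from the squared Vandermonde. That integrand has total degree $n^2-n$, whence $R_n\le(\eps/4)^{n^2(1+o(1))}$ by the Fekete--Chebyshev constant of $[0,\eps]$, while clearing denominators costs $\delta_{n-1}\delta_n\cdots\delta_{2n-2}\le C_2^n\Delta^{(3/2)n^2}$: the exponent $3/2$ is simply $\lim_n n^{-2}\sum_{j=n-1}^{2n-2}j$, not an arithmetic refinement of Chebyshev polynomials. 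Crucially, under $\xi=p/q$ the extra cost is only $q^n$, exponentially negligible against the $c^{n^2}$ scale. That is precisely the leverage your degree-one forms cannot provide: there everything lives at scale $c^N$, and the denominators must genuinely beat the analytic size, which forces the unattainable arithmetic Chebyshev estimate.
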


Our proof, in which the classical Vandermonde determinants as well as Hankel determinants of the sequence \eqref{eq:r_n}
show up, is given in Section~\ref{s:det}.
Note that $\eps\Delta^{3/2}/4<(\eps\Delta)^{3/2}$ if $\eps>1/16$, thus
Proposition~\ref{pr2} has potentials to produce irrationality results when Proposition~\ref{pr1} is not applicable.
For example, if $\Delta\approx e=2.7182\dots$ and $0.34<\eps<0.89$ then Proposition~\ref{pr2} implies the irrationality of $\xi$, while Proposition~\ref{pr1} does not.
In Sections~\ref{s:log3} and \ref{s:pi} we give such applications of the new irrationality criterion to $\log3$ and $\pi$.
Though these numbers are known to be irrational, our proofs based on the argument of Proposition~\ref{pr2} are new.

The example $\xi=1$, $a_n=1$, $b_n=(2^n-1)/2^n$, so that $a_n\xi-b_n=1/2^n$, demonstrates the importance of~(d):
in this case we can take $\eps=1/2$ (or slightly bigger) and $\delta_n=2^n$, hence $\Delta=2$.
The condition $\eps\Delta^{3/2}/4=\sqrt2/4<1$ is clearly satisfied but without any implication,
as there is no way to write the linear forms $a_n\xi-b_n$ in the form assumed in~(d)
for a non-constant function $z(\bx)$. (In fact, all the Hankel determinants that appear in the proof of Proposition~\ref{pr2} below vanish in this case.)
On the other hand, the example $z(x)=x$, $\omega(x)=\d x$ and $\gamma=[0,1]$ (this corresponds to the choice $\xi=1$ or~$0$), so that $\eps=1$,
$$
\delta_n=\lcm(1,2,\dots,n+1)
$$
and $\Delta$ is anything slightly larger than~$e$, shows that the condition $\eps\Delta^{3/2}/4<1$ cannot be relaxed ``too much.''
In Section~\ref{s:diam} we discuss the optimality of the latter constraint as well as comment on the orthogonality induced by the data from condition~(d).
Finally, in Section~\ref{s:const} we speculate about some mathematical constants whose irrationality is still to be established.

\section{Proof of the irrationality criterion}
\label{s:det}

\begin{proof}[Proof of Proposition~\textup{\ref{pr2}}]
Conditions (a) and (d) imply that
$$
0<\sup_{\bx\in\gamma}z(\bx)\le\eps.
$$
Consider the polynomial forms
\begin{equation}
R_n=\frac1{n!}\idotsint_{\gamma^n}\prod_{1\le j<\ell\le n}(z(\bx^\ell)-z(\bx^j))^2\omega^n
\in\mathbb Q\xi^n+\dots+\mathbb Q\xi+\mathbb Q.
\label{eq:R_n}
\end{equation}
Note that $R_n>0$, because the integrand is nonnegative.
Furthermore, the form \eqref{eq:R_n} has a nice Hankel determinant evaluation due to Heine \cite{He78},
\begin{equation}
R_n=\det_{0\le j,\ell<n}(r_{j+\ell}),
\label{eq:Heine}
\end{equation}
which together with hypotheses (b) and (e) imply that
\begin{equation*}
\delta_{n-1}\delta_n\dotsb\delta_{2n-2}R_n\in\mathbb Z\xi^n+\dots+\mathbb Z\xi+\mathbb Z.
\end{equation*}
In addition, we have
\begin{equation}
\biggl(\sup_{\substack{0\le z_j\le\eps\\j=1,\dots,n}}\prod_{1\le j<\ell\le n}(z_j-z_\ell)^2\biggr)^{1/n^2}
\to\eps/4 \qquad\text{as}\quad n\to\infty
\label{eq:Cheb}
\end{equation}
(the Fekete--Chebyshev constant of the interval $[0,\eps]$), so that
\begin{equation*}
R_n<(\eps/4)^{n^2+o(n^2)} \qquad\text{as}\quad n\to\infty.
\end{equation*}

If $\xi$ were rational, $\xi=p/q$, then $\prod_{j=0}^{n-1}\delta_{n-1+j}\cdot q^nR_n$ would be a positive integer for any $n$.
On the other hand,
$$
\delta_{n-1}\delta_n\dotsb\delta_{2n-2}q^nR_n<(C_2q)^n\Delta^{(3/2)n^2}(\eps/4)^{(1+o(1))\,n^2}\to0 \qquad\text{as}\quad n\to\infty,
$$
a contradiction.
\end{proof}

In the argument above we could have used the inclusion $\prod_{j=0}^{n-1}\delta_{n-1+j}\cdot q^n\cdot n!R_n\in\mathbb Z$ instead
(with the factorial factor) as an immediate consequence of the integral representation \eqref{eq:R_n}, no reference to~\eqref{eq:Heine} is necessary.
But because Heine's theorem about the determinant representation of $R_n$ is itself quite elementary and nice, we reproduce its proof here for completeness.

\begin{proof}[Proof of identity~\textup{\eqref{eq:Heine}}]
Using the definition of the sequence $r_n$ write
\begin{align*}
\det_{0\le j,\ell<n}(r_{j+\ell})
&=\det_{0\le j,\ell<n}\biggl(\int_\gamma z(\bx^j)^{j+\ell}\omega(\bx^j)\biggr)
\\
&=\idotsint_{\gamma^n}\det_{0\le j,\ell<n}\bigl(z(\bx^j)^{j+\ell}\bigr)\,\omega(\bx^0)\,\omega(\bx^1)\dotsb\omega(\bx^{n-1})
\\
&=\idotsint_{\gamma^n}\det_{0\le j,\ell<n}(z_j^{j+\ell})\,\omega^n
\\
&=\idotsint_{\gamma^n}z_1z_2^2\dotsb z_{n-1}^{n-1}\prod_{0\le j<\ell\le n-1}(z_\ell-z_j)\,\omega^n
\end{align*}
where $z_j=z(\bx^j)$ and the evaluation
\begin{equation}
\det_{0\le j,\ell<n}(z_j^\ell)=\prod_{0\le j<\ell\le n-1}(z_\ell-z_j)
\label{eq:Vand}
\end{equation}
is applied. It remains to notice that the latter Vandermonde determinant is invariant, up to multiplication by $\operatorname{sgn}(\sigma)$,
under the transformations $\sigma\in\mathfrak S_n$ of the $n$-element set $\{0,1,\dots,n-1\}$ of indices and that
$$
\sum_{\sigma\in\mathfrak S_n}\operatorname{sgn}(\sigma)\,z_{\sigma(1)}z_{\sigma(2)}^2\dotsb z_{\sigma(n-1)}^{n-1}
=\det_{0\le j,\ell<n}(z_j^\ell)
$$
is the same determinant~\eqref{eq:Vand}.
\end{proof}

\section{Some applications of the criteria}
\label{s:log3}

For a (real or complex) $a\ne0,1$, introduce the sequence of quantities
\begin{equation}
I_n=I_n(a)=\int_1^a\frac{(x-1)^n(a-x)^n}{x^{n+1}}\,\d x,
\qquad n=0,1,2,\dots\,.
\label{eq:1}
\end{equation}
Applying the binomial theorem to each of the factors in the numerator of integrand and then integrating we find out that
\begin{align*}
I_n
&=\sum_{j,\ell=0}^n\binom nj\binom n\ell(-1)^{n+j+\ell}a^{n-\ell}\int_1^ax^{j+\ell-n-1}\d x
\\
&=\sum_{\substack{j,\ell=0\\j+\ell\ne n}}^n\binom nj\binom n\ell(-1)^{n+j+\ell}\,\frac{a^j-a^{n-\ell}}{j+\ell-n}
+(\log a)\sum_{j=0}^n{\binom nj}^2a^j,
\end{align*}
where $\log a$ is understood as the integral of $(\d x)/x$ along a given path from 1 to $a$ in~\eqref{eq:1}.
Since $|j+\ell-n|\le n$ in each summand of the first sum, we conclude from the representation obtained that
\begin{equation}
d_nI_n\in\mathbb Z[a]\,\log a+\mathbb Z[a],
\qquad n=0,1,2,\dots,
\label{eq:1a}
\end{equation}
where $d_n$ denotes the least common multiple of the integers from $1$ to~$n$ (and $d_0=1$).
Note that $d_n^{1/n}\to e$ as $n\to\infty$ by the prime number theorem.
If $a>1$ is an integer then the inclusions \eqref{eq:1a} simply mean that $d_nI_n\in\mathbb Z\,\log a+\mathbb Z$ for $n=0,1,2,\dots$\,.

The family above corresponds to the choice
$$
z(x)=\frac{(x-1)(a-x)}x, \quad \omega(x)=\frac{\d x}x \quad\text{and}\quad \gamma=[1,a]\subset\mathbb R,
$$
in the notation of (a)--(e). When $a>1$, one easily finds that
$$
\max_{1\le x\le a}\{z(x)\}=z(\sqrt a)=(\sqrt a-1)^2.
$$

\begin{theorem}
\label{pr3}
$\log2$ and $\log3$ are irrational.
\end{theorem}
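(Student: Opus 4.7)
The plan is to invoke Proposition~\ref{pr2} for the integrals $I_n(a)$ from~\eqref{eq:1} with $a\in\{2,3\}$, taking $(z,\omega,\gamma)$ as specified just above the theorem and $\delta_n=d_n$. Beyond the data already collected in this section, I only need to verify hypotheses~(a)--(e) and confirm the numerical inequality $\eps\Delta^{3/2}/4<1$.

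The integral form~(d) holds by construction, with $z$ non-constant and continuous on $\gamma=[1,a]$. Divisibility~(e), $d_n\mid d_{n+1}$, is immediate. Inclusion~(b) with $\delta_n=d_n$ is precisely~\eqref{eq:1a} specialised to an integer~$a$. For~(c), I would fix any $\Delta>e$: the prime number theorem gives $d_n^{1/n}\to e$, so $d_n<\Delta^n$ for all sufficiently large~$n$, and the finitely many earlier terms may be absorbed into~$C_2$. For~(a), the integrand of~\eqref{eq:1} is non-negative and not identically zero, hence $I_n>0$, while the pointwise bound $z(x)\le(\sqrt a-1)^2$ together with $\int_1^a\d x/x=\log a$ gives $I_n\le(\log a)\,\eps^n$ with $\eps=(\sqrt a-1)^2$.

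It remains to verify $\eps\Delta^{3/2}/4<1$ for each~$a$, with $\Delta$ chosen just above~$e$. For $a=2$ one has $\eps=3-2\sqrt 2\approx 0.172$, and $\eps e^{3/2}/4\approx 0.192<1$. For $a=3$ the value $\eps=4-2\sqrt 3\approx 0.536$ yields $\eps e^{3/2}/4\approx 0.600<1$. Proposition~\ref{pr2} then concludes the irrationality of $\log 2$ and $\log 3$.

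There is no genuine obstacle here: the argument reduces to plugging the integrals $I_n(a)$ into Proposition~\ref{pr2} and performing the short numerical check. The case $a=2$ is in fact already covered by the classical criterion (Proposition~\ref{pr1}) because $(3-2\sqrt 2)e\approx 0.47<1$; however, for $a=3$ we have $(4-2\sqrt 3)e\approx 1.46>1$, so Proposition~\ref{pr1} does not apply and the sharper exponent $3/2$ of Proposition~\ref{pr2} is genuinely needed---this is where the new criterion delivers a result that the classical one cannot.
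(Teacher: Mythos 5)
Your proposal is correct and follows essentially the same route as the paper: the paper likewise feeds the integrals $I_n(a)$ with the stated $(z,\omega,\gamma)$ and $\delta_n=d_n$ into the criteria, disposing of $\log2$ via Proposition~\ref{pr1} (since $(\sqrt2-1)^2e=0.4663\ldots<1$) and of $\log3$ via Proposition~\ref{pr2} (since $(\sqrt3-1)^2e^{3/2}/4=0.6004\ldots<1$). Your more detailed verification of hypotheses (a)--(e) and your observation that $\log3$ genuinely requires the new criterion match the paper's intent exactly.
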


\begin{proof}
As $(\sqrt2-1)^2e=0.4663\ldots<1$, the irrationality of $\log2$ follows already from application of Proposition~\ref{pr1}.
In the case of $\log3$ we use $(\sqrt3-1)^2e^{3/2}/4=0.6004\ldots<1$ and Proposition~\ref{pr2}.
\end{proof}

\section({A new proof of the irrationality of \003\300}){A new proof of the irrationality of $\pi$}
\label{s:pi}

We can also use the above argument for $a=i=\sqrt{-1}$, when the integrals in~\eqref{eq:1} produce approximations
to $\pi/2=-i\,\log i$ with coefficients from $\mathbb Q[i]$. (For all practical purposes we can think of integration in~\eqref{eq:1}
as going along the arc of the unit circle.)
In fact, the change of variable $x=(1+it)/(1-it)$ transforms the integrals into
\begin{equation*}
I_n=I_n(i)=2^{n+1}i(-1-i)^n\int_0^1\frac{t^n(1-t)^n}{(1+t^2)^{n+1}}\,\d t,
\qquad n=0,1,2,\dots;
\end{equation*}
therefore, it follows from \eqref{eq:1a} and the latter that
\begin{equation}
2i(1-i)^{4\{n/4\}}d_nI_n\in\mathbb Z\,\pi+\mathbb Z,
\qquad n=0,1,2,\dots,
\label{eq:1b}
\end{equation}
in this case, where $\{\,\cdot\,\}$ denotes the fractional part.

\begin{theorem}
\label{pr4}
$\pi$ is irrational.
\end{theorem}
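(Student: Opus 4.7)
The plan is to apply Proposition~\ref{pr2} to a rescaled version of the sequence $J_n=\int_0^1 t^n(1-t)^n/(1+t^2)^{n+1}\,\d t$ underlying~\eqref{eq:1b}. The naive choice $r_n=J_n$ gives $\eps=(\sqrt2-1)/2$ from the integrand but $\Delta=2\sqrt2\,e$ from~\eqref{eq:1b}, producing $\eps\Delta^{3/2}/4\approx1.10$, which falls short. The trick is to absorb as much of the factor $2^{3n/2}$ in~\eqref{eq:1b} into the integrand as rationality of coefficients allows; since $2^n\mid 2^{(3n+r)/2+2}$ for every $n$ (with $r=n\bmod 4$), the maximal integer rescaling is $r_n:=2^nJ_n$, giving
\begin{equation*}
r_n=\int_0^1\biggl(\frac{2t(1-t)}{1+t^2}\biggr)^n\frac{\d t}{1+t^2},
\end{equation*}
which realises hypothesis~(d) with $z(t)=2t(1-t)/(1+t^2)$ continuous, non-constant, and nonnegative on $\gamma=[0,1]$. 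A short calculus exercise gives $\max_{[0,1]}z=\sqrt2-1$, attained at $t=\sqrt2-1$; so hypothesis~(a) holds with $\eps=\sqrt2-1$ and $r_n\le(\sqrt2-1)^n\pi/4$.

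For hypotheses (b) and (c), I would substitute $I_n=2^{n+1}i(-1-i)^nJ_n$ into \eqref{eq:1b} and use $(1-i)^r(1+i)^n=\pm2^{(n+r)/2}$ to unwind the complex factors; what remains is $\pm 2^{(3n+r)/2+2}d_n J_n\in\mathbb Z\pi+\mathbb Z$, so $\delta_n r_n\in\mathbb Z\pi+\mathbb Z$ with $\delta_n:=2^{(n+r)/2+2}d_n$ (integer, since $n+r$ is always even). Because $(n+r)/2+2\le n/2+7/2$, we have $\delta_n^{1/n}\to e\sqrt2$, giving $\Delta=e\sqrt2$.

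The main obstacle I anticipate is hypothesis~(e). The exponent $(n+r)/2+2$ in $\delta_n$ drops by one at every transition $n=4k+3\to4k+4$; when $d_{4k+4}=d_{4k+3}$ (first happening at $n=11$), we get $\delta_n\nmid\delta_{n+1}$. My remedy is to replace $\delta_n$ by $\hat\delta_n:=\lcm(\delta_0,\dots,\delta_n)$, for which (e) holds by construction. Odd-prime valuations of $\hat\delta_n$ coincide with those of $d_n$, and $v_2(\hat\delta_n)\le v_2(d_n)+n/2+O(1)$, so $\hat\delta_n^{1/n}\to e\sqrt2$ is preserved.

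With all hypotheses in place, the criterion of Proposition~\ref{pr2} reads
\begin{equation*}
\eps\Delta^{3/2}/4=(\sqrt2-1)(e\sqrt2)^{3/2}/4=(\sqrt2-1)\,e^{3/2}\,2^{-5/4}\approx0.780<1,
\end{equation*}
and the irrationality of~$\pi$ follows. Observe that $\eps\Delta=(\sqrt2-1)\cdot e\sqrt2=(2-\sqrt2)\,e\approx1.59>1$, so Proposition~\ref{pr1} alone is insufficient---exactly the regime in which the refined criterion of Proposition~\ref{pr2} was designed to shine.
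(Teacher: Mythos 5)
Your proposal is correct, and it rests on the same two pillars as the paper: the integrals $J_n=\int_0^1 t^n(1-t)^n(1+t^2)^{-n-1}\,\d t$ with the arithmetic input \eqref{eq:1b}, fed into Proposition~\ref{pr2}. The difference is how the factor $2^{3n/2}$ is split between the integrand and the denominators. The paper keeps all of it on the analytic side, taking $\eps=2^{3/2}\max_t t(1-t)/(1+t^2)=2-\sqrt2$ and $\Delta=e$, which yields the better constant $(2-\sqrt2)e^{3/2}/4=0.656\ldots$; the price is that hypothesis~(d) holds only after ``a slight adaptation'' (integration over the complex arc, or equivalently $z(t)=2^{3/2}t(1-t)/(1+t^2)$ with $\sqrt2\,r_n$ in place of $r_n$ for odd $n$, handled via identity~\eqref{eq:red}). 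You instead absorb only the integer part $2^n$ into $z$, so that $z(t)=2t(1-t)/(1+t^2)$ is a genuine nonnegative rational-coefficient weight and Proposition~\ref{pr2} applies verbatim, at the cost of $\eps=\sqrt2-1$, $\Delta=\sqrt2\,e$ and the weaker margin $0.780<1$. (Note $\eps\Delta=(2-\sqrt2)e$ is the same in both normalizations, so Proposition~\ref{pr1} fails either way, while $\eps\Delta^{3/2}/4$ is not invariant---it penalizes your larger $\Delta$.) Your treatment of hypothesis~(e), replacing $\delta_n=2^{(n+r)/2+2}d_n$ by $\lcm(\delta_0,\dots,\delta_n)$ to restore divisibility without affecting the growth rate, is a necessary and correctly executed repair that the paper's own normalizations ($\delta_n=d_n$ or $4d_n$) sidestep automatically. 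In short: same engine, a cleaner fit to the hypotheses as literally stated, a slightly worse but still sufficient numerical constant.
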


\begin{proof}
We use
$$
\max_{x\in\operatorname{arc}(1,i)}\biggl|\frac{(x-1)(i-x)}x\biggr|
=2^{3/2}\max_{t\in[0,1]}\frac{t(1-t)}{1+t^2}=2-\sqrt2=0.5857\ldots
$$
and the inclusions~\eqref{eq:1b}. Since $(2-\sqrt2)e^{3/2}/4=0.6563\ldots<1$, (a slight adaptation of) Proposition~\ref{pr2}
implies that the approximated number $\pi$ is irrational.
\end{proof}

We remark that the potentials of the integral construction
$$
\int_0^1\frac{t^n(1-t)^n}{(1+t^2)^{n+1}}\,\d t
\in\mathbb Q\pi+\mathbb Q
$$
from the classical perspective (that is, Proposition~\ref{pr1})
are already discussed in the section ``A second attempt'' in \cite[p.~375]{Be00}:
The first few approximations to $\pi$ produced by the integrals look promising,
``[u]nfortunately in the long run the asymptotics have decided otherwise.''
Proposition~\ref{pr2} can be applied (though not directly since the corresponding $r_n$ there have
to be replaced with $\sqrt2\,r_n$ for $n$~odd to accommodate the arithmetic part of the argument)
to the integrals by choosing
$$
z(t)=\frac{2^{3/2}t(1-t)}{1+t^2}, \quad \omega(t)=\frac{\d t}{1+t^2}, \quad \gamma=[0,1]
$$
and $\delta_n=4d_n$.
Note the related elementary identity
\begin{equation}
\det_{0\le j,\ell<n}(c^{j+\ell}v_{j+\ell})
=c^{n(n-1)}\det_{0\le j,\ell<n}(v_{j+\ell}),
\qquad c\in\mathbb C\setminus\{0\},
\label{eq:red}
\end{equation}
that allows to easily manipulate with the extra factors like $2^{3/2}$ above in computing the Hankel-determinant asymptotics.

\section{Commentary}
\label{s:diam}

Though Proposition~\ref{pr2} does not sound very practical, a question is about how much we can relax hypothesis (d)
to still possess its implication.

One possibility is to ``densify'' the sequence of linear forms $r_n$ and consider instead the sequence
$\hat r_n=r_{\lfloor n/k\rfloor}$ for some fixed integral $k\ge2$ and together with the corresponding Hankel
determinants $\hat R_n=\det_{0\le i,j<n}(\hat r_{i+j})$. Surprisingly enough (as not discovered in the existing literature)
the underlying ``$k$-stuttering'' Vandermonde determinants
$$
\det_{0\le j,\ell<n}(z_j^{\lfloor(j+\ell)/k\rfloor})
$$
(that replace the classical Vandermonde determinants \eqref{eq:Vand} in the above proof of Heine's identity \eqref{eq:Heine})
factors into the product of powers of $z_j$ and of $k$ Vandermonde determinants; roughly speaking, $\hat R_n$ behaves similar to $R_{\lfloor n/k\rfloor}^k$,
thus leading to no refinement of Proposition~\ref{pr2}.
Furthermore, we recall that the non-vanishing of $\hat R_n$ for infinitely many indices $n$ can be shown without the reference
to a generalization of Heine's identity but with the use of the equality
$$
\sum_{n=0}^\infty\hat r_nz^n
=(1+z+\dots+z^{k-1})\sum_{n=0}^\infty r_nz^{kn}
$$
and of an old result of Kronecker (see \cite[pp.~566--567]{Kr81} or
\cite[Division~7, Problem~24]{PS76}): the Hankel determinants $\det_{0\le j,\ell\le n-1}(v_{i+j})$ for $n=0,1,\dots$
eventually vanish if and only if the generating series $\sum_{n=0}^\infty v_nz^n$ represents a rational function.

Further variations are still possible, for example, considering other even powers of the Vandermonde determinants in~\eqref{eq:R_n}\,---\,this
has a nice interpretation by means of the Hankel hyperdeterminants (see \cite{LT03}) or replacing the Vandermonde determinants
by other polynomials $p(\bz)\in\mathbb Z[z_1,\dots,z_n]$ (and ensuring that the corresponding integrals over $\gamma^n$ do not vanish).
But there is a natural analytical obstruction to getting anything weaker than $\eps\Delta^{3/2}/4<1$. It is based on the fact that
the $n$-variate $\mathbb Z$-transfinite diameter $t_{\mathbb Z}([0,\eps]^n)$, which is introduced and studied in \cite{BP09}
for general sets $E\subset\mathbb C^n$, tends to $\eps/4$ as $n\to\infty$\,---\,see Proposition~\ref{PR3} below.
This means that for a nonzero $n$-variable polynomial $p(\bz)$ of total degree $N$ with integral coefficients we have
$$
\sup_{\bz\in[0,\eps]^n}|p(\bz)|\ge\rho_n^N,
$$
where $\rho_n\to\eps/4$ as $n\to\infty$,
thus showing that the upper estimate \eqref{eq:Cheb} is best possible,
as the total degree of the polynomial $\prod_{1\le j<\ell\le n}(z_j-z_\ell)^2$ is $n^2-n$.

\begin{prop}
\label{PR3}
For the multivariate $\mathbb Z$-transfinite diameter introduced in \textup{\cite{BP09}} we have
$$
\lim_{n\to\infty}t_{\mathbb Z}([a,b]^n)=\frac{|b-a|}4.
$$
\end{prop}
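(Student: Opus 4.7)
The plan is to squeeze $t_{\mathbb Z}([a,b]^n)$ between two quantities both tending to $c:=|b-a|/4$, with the upper bound being the main content and the lower bound following from the classical univariate Chebyshev extremal inequality iterated in each coordinate.

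For the upper bound I would recycle the integer polynomial that has already done the heavy lifting in the proof of Proposition~\ref{pr2}: the Vandermonde-squared form
$$
p_n(\bz)=\prod_{1\le j<\ell\le n}(z_j-z_\ell)^2\in\mathbb Z[z_1,\dots,z_n]
$$
has total degree $N_n=n(n-1)$, and the Fekete--Chebyshev limit~\eqref{eq:Cheb} (which depends only on the length of the interval) gives $\|p_n\|_{[a,b]^n}^{1/N_n}\to c$ as $n\to\infty$. By the very definition of $t_{\mathbb Z}$ one has $t_{\mathbb Z}([a,b]^n)\le\|p_n\|_{[a,b]^n}^{1/N_n}$, so $\limsup_{n\to\infty}t_{\mathbb Z}([a,b]^n)\le c$.

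For the matching lower bound I would prove $t_{\mathbb Z}([a,b]^n)\ge c$ for every fixed $n$ by induction on $n$. The base case $n=1$ is the classical Chebyshev bound: a nonzero $q\in\mathbb Z[z]$ of degree $N$ has integer leading coefficient of absolute value at least one, hence $\|q\|_{[a,b]}\ge 2c^N$. For the step $n-1\to n$, given a nonzero $p\in\mathbb Z[z_1,\dots,z_n]$ of total degree $N$, I would write $p=\sum_k p_k(z_1,\dots,z_{n-1})z_n^k$ and set $K=\deg_{z_n}p$, so that $p_K\in\mathbb Z[z_1,\dots,z_{n-1}]$ is nonzero of total degree at most $N-K$. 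Applying the univariate Chebyshev inequality to $p$ as a polynomial in $z_n$ for each fixed $(z_1,\dots,z_{n-1})\in[a,b]^{n-1}$ gives $\sup_{z_n\in[a,b]}|p|\ge 2c^K|p_K(z_1,\dots,z_{n-1})|$; taking the sup over the remaining variables and invoking the induction hypothesis on $p_K$ yields $\|p\|_{[a,b]^n}\ge 2^n c^{K+\deg p_K}\ge 2^n c^N$, the last step using $c\le 1$. Taking $N$-th roots and letting $N\to\infty$ produces $t_{\mathbb Z}([a,b]^n)\ge c$.

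The main obstacle I anticipate is a small technical nuisance in the lower bound rather than a genuine obstruction. The upper bound falls out essentially for free once one recognises that the Vandermonde-squared polynomial driving Proposition~\ref{pr2} is also the natural candidate for the multivariate $\mathbb Z$-Chebyshev problem on $[a,b]^n$. The subtlety in the induction is that $\deg p_K$ need not equal $N-K$ (only be $\le N-K$), which forces the assumption $c\le 1$ in order to replace $c^{K+\deg p_K}$ by $c^N$; this covers every case relevant to the applications in this paper. For $c>1$ the proposition still holds---more easily, in fact, since by Fekete--Szeg\H o the univariate $\mathbb Z$-transfinite diameter already equals $c$---but would require a slightly different, more coefficient-theoretic argument. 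Modulo this minor case split, the proof uses no machinery from~\cite{BP09} beyond the definition of $t_{\mathbb Z}$ itself.
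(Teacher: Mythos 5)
Your proof is correct in substance but takes a genuinely different, more elementary route than the paper. The paper's own argument sandwiches $t_{\mathbb Z}([a,b]^n)$ between Zakharyuta's two $\mathbb C$-extensions: the lower bound comes from $T(E)\le t_{\mathbb Z}(E)$ combined with $T(E)\ge c(E)$ and the product property $c([a,b]^n)=c([a,b])=|b-a|/4$ from \cite{Za75}, and the upper bound is the Hilbert-type inequality $t_{\mathbb Z}(E)\le\tau(E)^{n/(n+1)}$ of \cite[Theorem~3.1]{BP09} together with $\tau([a,b]^n)=\tau([a,b])=|b-a|/4$. You replace both halves by hands-on arguments: the explicit discriminant polynomial $\prod_{j<\ell}(z_j-z_\ell)^2$ plus Fekete's limit \eqref{eq:Cheb} for the upper bound (needing nothing from \cite{BP09} beyond the definition of $t_{\mathbb Z}$, though you should make explicit that passing from the single polynomial $p_n$ to $t_{\mathbb Z}\le\|p_n\|^{1/N_n}$ uses the powers $p_n^k$), and a coordinate-by-coordinate induction with the univariate Chebyshev bound for the lower bound, which is in effect a direct elementary proof of the capacity inequality $T([a,b]^n)\ge c([a,b])$ that the paper imports from \cite{Za75}. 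What you lose is uniformity in the interval: the step $c^{K+\deg p_K}\ge c^N$ requires $c\le1$, i.e.\ $|b-a|\le4$, as you honestly flag, whereas the capacity-theoretic route is normalization-free; what you gain is a self-contained proof covering exactly the range relevant to the applications here. One bookkeeping slip: the factor $2^n$ overshoots when some variable occurs to degree $0$ in $p$, since the degree-zero case of the Chebyshev inequality is $\|q\|\ge|q|$ without the factor $2$; the safe uniform statement is $\|q\|_{[a,b]}\ge|\operatorname{lead}(q)|\,(|b-a|/4)^{\deg q}$, which still yields $\|p\|_{[a,b]^n}\ge c^N$ and hence the desired $t_{\mathbb Z}([a,b]^n)\ge c$.
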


\begin{proof}
There are two $\mathbb C$-extensions of the $n$-variate $\mathbb Z$-transfinite diameter, called $\tau(E)$ and $T(E)$ in \cite{Za75}.
The Hilbert-type relation between $t_{\mathbb Z}(E)$ and $\tau(E)=t_{\mathbb C}(E)$ is established in \cite[Theorem 3.1]{BP09}:
\begin{equation}
t_{\mathbb Z}(E)\le\tau(E)^{n/(n+1)}.
\label{e1}
\end{equation}
The estimate
\begin{equation}
T(E)\le t_{\mathbb Z}(E)
\label{e2}
\end{equation}
trivially follows from the definition of the two characteristics. Furthermore, the bound $T(E)\ge c(E)$, where $c(E)$ is the capacity of $E$ is proved in~\cite{Za75}.
If we restrict our attention to the cartesian product $E=[a,b]^n$ then $c(E)=c([a,b])$ by \cite[property~{\cyrrm{d}}$'$)]{Za75}
and $\tau(E)=\tau([a,b])$ by \cite[property~{\cyrrm{d}})]{Za75}. It remains to use $\tau([a,b])=c([a,b])=|b-a|/4$ to conclude from \eqref{e1} and \eqref{e2} that
the required limit relation holds true.
\end{proof}

Our further remark refers to the fact that the data $\gamma\subset\mathbb R^m$, $z(\bx)$ and $\omega(\bx)$ from condition~(d) in Section~\ref{s:irr}
give rise to the scalar product on a space of single variable functions of $z\in\mathbb R$:
$$
\langle u,v\rangle=\int_\gamma u(z(\bx))v(z(\bx))\,\omega(\bx),
$$
so that the sequence $r_n$ in (d) is nothing but the sequence of the corresponding moments.
It may be of some interest to study the orthogonal polynomials $p_n(z)$ for $n=0,1,\dots$ arising from the product:
the polynomial $p_n(z)$ can be explicitly written as the determinant $R_{n+1}$ in \eqref{eq:Heine}, in which the last row
is replaced with the row $1,z,\dots,z^n$, or as the integral
$$
p_n(z)=\frac1{n!}\idotsint_{\gamma^n}\prod_{j=1}^n(z-z(\bx^j))\prod_{1\le j<\ell\le n}(z(\bx^\ell)-z(\bx^j))^2\omega(\bx^1)\dotsb\omega(\bx^n).
$$
Plenty of the theory of orthogonal polynomials naturally extends to these general settings,
though it is not clear how much of this can be used in the irrationality context.

\section{Catalan and $q$-Ap\'ery constants}
\label{s:const}

It is worth mentioning that Proposition~\ref{pr2} is applicable to the integrals
$$
d_n^2\iint_{[0,1]^2}\biggl(\frac{x(1-x)y(1-y)}{1-xy}\biggr)^n\frac{\d x\,\d y}{1-xy}
\in\mathbb Z\,\zeta(2)+\mathbb Z
$$
and
$$
d_n^3\iiint_{[0,1]^3}\biggl(\frac{x(1-x)y(1-y)z(1-z)}{1-(1-xy)z}\biggr)^n\frac{\d x\,\d y\,\d z}{1-(1-xy)z}
\in\mathbb Z\,\zeta(3)+\mathbb Z
$$
used by Beukers in his proof \cite{Be79} of Ap\'ery's theorem about the irrationality of $\zeta(2)$ and of $\zeta(3)$, the Ap\'ery constant.
This follows from
\begin{gather*}
\max_{(x,y)\in[0,1]^2}\frac{x(1-x)y(1-y)}{1-xy}=\biggl(\frac{\sqrt5-1}2\biggr)^5,
\\
\max_{(x,y,z)\in[0,1]^3}\frac{x(1-x)y(1-y)z(1-z)}{1-(1-xy)z}=(\sqrt2-1)^4,
\end{gather*}
the prime number theorem and calculation
$$
\frac14\biggl(\frac{\sqrt5-1}2\biggr)^5e^3=0.4527\ldots<1,
\qquad
\frac14(\sqrt2-1)^4e^{9/2}=0.6624\ldots<1.
$$
For the related construction of rational approximations
$$
2^{4n+1}d_{2n}^2\iint_{[0,1]^2}\biggl(\frac{x(1-x)y(1-y)}{1-xy}\biggr)^n\frac{x^{-1/2}(1-y)^{-1/2}\,\d x\,\d y}{1-xy}
\in\mathbb Z\,G+\mathbb Z
$$
to the Catalan constant
$$
G=\sum_{k=0}^\infty\frac{(-1)^k}{(2k+1)^2}
$$
(see \cite{Ne15,Ri06,Zu02}), none of the irrationality criteria works, because of the too impetuous growth of the denominators required.

Other ways of ``densifying'' the sequence of rational approximations, different from the one discussed in Section~\ref{s:diam},
can be proposed, that take into account the expression of $z(\bx)$. For example, in the case of the Catalan constant we
can choose rational approximations to be
\begin{align*}
\tilde r_n
&=\iint_{[0,1]^2}
\frac{x^{\lfloor(n+1)/5\rfloor}(1-x)^{\lfloor(n+2)/5\rfloor}y^{\lfloor(n+3)/5\rfloor}(1-y)^{\lfloor(n+4)/5\rfloor}}{(1-xy)^{\lfloor n/5\rfloor}}
\\ &\qquad\times 
\frac{x^{-1/2}(1-y)^{-1/2}\,\d x\,\d y}{1-xy}
\in\mathbb Q\,G+\mathbb Q,
\qquad n=0,1,2,\dots,
\end{align*}
because the corresponding Hankel determinants \emph{without} the integer parts in the exponents behave asymptotically like $(\eps^{1/5}/4)^{n^2}$,
where $\eps=((\sqrt5-1)/2)^5$. However, numerical computation of the honest Hankel determinants $\tilde R_n=\det_{0\le j,\ell<n}(\tilde r_{j+\ell})$ suggests
$\tilde R_n^{1/n^2}\to(\eps/4)^{1/5}$ as $n\to\infty$;
this numerical observation can be rigorously justified using again the ``5-stuttering'' Vandermonde determinants.

Another interesting question is about possible $q$-extensions of Proposition~\ref{pr2};
the work \cite{KRVZ09} suggests that there may be some.
For example, in \cite{KRZ06} $q$-analogues of the Ap\'ery--Beukers rational approximations to $\zeta(3)$ were constructed,
which approximate the $q$-series
$$
\zeta_q(3)=\sum_{k=1}^\infty\sigma_2(k)q^k=\sum_{m=1}^\infty\frac{m^2q^m}{1-q^m}=\sum_{k=1}^\infty\frac{q^k(1+q^k)}{(1-q^k)^3},
$$
for $q$ the reciprocal of an integer different from $0$ and $\pm1$. It is expected that this $q$-Ap\'ery constant is irrational for such values of~$q$,
but the corresponding $q$-approx\-imations $r_n$ from~\cite{KRZ06} do not produce any irrationality result,
because $|r_n|^{1/n^2}\sim1$ while $\delta_n^{1/n^2}\sim|q|^{-9/\pi^2}$ as $n\to\infty$, where $\delta_nr_n\in\mathbb Z\zeta_q(3)+\mathbb Z$.
Though no arithmetic consequences come out from considering the Hankel determinants $R_n=\det_{0\le j,\ell<n}(r_{j+\ell})$,
an analytic argument \cite[Section 4]{Zu16} shows their (better than expected) behaviour $|R_n|^{1/n^3}\sim|q|^{1/3}$ as $n\to\infty$.

More ideas are required to deal with the quantities like $G$ and $\zeta_q(3)$.

\section*{Acknowledgements}

There are several inspirations for this project, the most recent one being the work \cite{Br14} of Francis Brown
and, in particular, his remark there: ``Much more optimistically still, one might hope to prove
the transcendence of $\zeta(3)$ by optimizing our polynomial forms in $\zeta(3)$ along the lines of~\cite{So96}.''
The other sources of inspirations include my joint work \cite{KRVZ09} with Christian Krattenthaler, Igor Rochev and Keijo V\"a\"an\"anen
on (related) Hankel-determinant constructions for certain $q$-hypergeometric series, and also the work \cite{Mo09} of Hartmut Monien
on (unrelated) Hankel determinants on the values of Riemann's zeta function at positive integers.
I thank all these colleagues as well as Igor Pritsker for numerous helpful chats about the topic of this project.
Furthermore, I am very grateful to St\'ephane Fischler whose constructive feedback was crucial at several places of the preliminary version.
Special thanks go to the anonymous referee of the journal for his healthy criticism.

Part of the work was done during my visit in the Max Planck Institute for Mathematics, Bonn, in March--April 2015.
I am thankful to the staff and guests of the institute for creating the unique ``mathemagical'' atmosphere for scientific performance.


\end{document}